\pdfoutput=1
\documentclass[10pt]{article}
\usepackage[a4paper, margin=1in]{geometry}
\usepackage{changepage}
\usepackage{amsmath, amssymb, amsthm}
\usepackage[utf8]{inputenc}
\usepackage[english]{babel}
\usepackage[numbers]{natbib}
\usepackage{url}
\usepackage[usenames]{color}
\usepackage[colorlinks=true]{hyperref}
\usepackage{cleveref}
\usepackage{tabularx}
\usepackage{breqn}
\usepackage[title]{appendix}
\usepackage{xcolor}
\theoremstyle{plain}

\newtheorem{corollary}{Corollary}[section]
\newtheorem{lemma}{Lemma}[section]

\newtheorem{theorem}{Theorem}[section]
\newtheorem*{theorem*}{Theorem}
\newtheorem{remark}{Remark}[section]

\theoremstyle{definition}
\newtheorem{example}{Example}[section]
\crefname{conjecture}{Conjecture}{Conjectures}
\crefname{theorem}{Theorem}{Theorems}
\crefname{theorem*}{Theorem}{Theorems}
\crefname{corollary}{Corollary}{Corollaries}
\crefname{lemma}{Lemma}{Lemmas}
\crefname{proposition}{Proposition}{Propositions}
\crefname{remark}{Remark}{Remarks}
\crefname{note}{Note}{Notes}
\crefname{definition}{Definition}{Definitions}
\crefname{notation}{Notation}{Notations}
\crefname{example}{Example}{Examples}
\crefname{question}{Question}{Questions}
\crefname{section}{\S}{Sections}
\crefname{equation}{Equation}{Equations}
\crefformat{equation}{(eq.~#2#1#3)} 
\DeclareMathOperator\sign{sign}

\newcommand{\floor}[1]{\left\lfloor #1 \right\rfloor}
\newcommand{\ceil}[1]{\left\lceil #1 \right\rceil}

\newcommand{\sgn}{\operatorname{sgn}}

\newcommand{\Z}{\mathbb{Z}}

\newcommand{\seqnum}[1]{\href{https://oeis.org/#1}{\rm \underline{#1}}}

\setlength{\parskip}{0.5em}
\setlength{\parindent}{0pt}

\title{On modular representations of C-recursive integer sequences}
\author{Mihai Prunescu \footnote{Research Center for Logic, Optimization and Security (LOS), Faculty of Mathematics and Computer Science, University of Bucharest, Academiei 14, Bucharest (RO-010014), Romania; Institute of Logic and Data Science, Bucharest, Romania; Simion Stoilow Institute of Mathematics of the Romanian Academy, Research unit 5, P. O. Box 1-764, Bucharest (RO-014700), Romania. E-mail: {\tt mihai.prunescu@imar.ro}, {\tt mihai.prunescu@gmail.com}.},
Joseph M. Shunia \footnote{Veeam Software, Columbus, OH, USA (Remote). E-mail: {\tt jshunia@gmail.com}}}
\date{February 24, 2025}

\begin{document}

\maketitle

\begin{abstract} \noindent
Prunescu and Sauras-Altuzarra showed that all C-recursive sequences of natural numbers have an arithmetic div-mod representation that can be derived from their generating function. This representation consists of computing the quotient of two exponential polynomials and taking the remainder of the result modulo a third exponential polynomial, and works for all integers $n \geq 1$. Using a different approach, Prunescu proved the existence of two other representations, one of which is the mod-mod representation, consisting of two successive remainder computations. This result has two weaknesses: (i) the representation works only ultimately, and (ii) a correction term must be added to the first exponential polynomial. We show that a mod-mod representation without inner correction term holds for all integers $n \geq 1$. This follows directly from the div-mod representation by an arithmetic short-cut from outside. 
\\[2mm]
\textbf{2020 Mathematics Subject Classification:} 11B37 (primary), 39A06 (secondary). \\[2mm]
\textbf{Keywords:} C-recursive sequence, modular arithmetic, term representation.
\end{abstract}

\section{Introduction}\label{SectionIntroduction}

The \textbf{C-recursive sequences of order $d$} are sequences $t : \mathbb N \rightarrow \mathbb C$ satisfying a relation of recurrence with constant coefficients:
$$
t(n+d) + a_1 t(n+d-1) + \dots + a_{d-1} t(n+1) + a_d t(n) = 0
$$
for all $n \in \mathbb{N}$. With the recurrence rule, we associate the polynomial:
$$
B(x) := 1 + a_1 X + \dots + a_d X^d .
$$

According with Theorem 4.1.1 in \cite{stanley}, and with Theorem 1 in \cite{PetkovsekZakrajsek}, the C-recursive sequences are exactly the sequences $(s(n))$ such that the generating function:
$$f(z) = \sum_{n \geq 0} t(n) z^n$$
is a rational function $A(z)/B(z)$ with $\deg(A) < \deg(B) = d$. 

Prunescu and Sauras-Altuzarra proved in \cite{prunescusauras2024representationcrecursive} that if $f$ is the generating function of a sequence consisting of natural numbers only, then
\begin{align*}
\exists c \in \mathbb{N} : \forall n \in \Z^+, \quad 
t(n) = \floor{ c^{n^2} f(c^{-n}) } \bmod c^n .    
\end{align*}
The exact statement will be given below, see Theorem \ref{ThmExtraction1}. When the sequence is C-recurrent, as $f$ is a rational function $A(x) / B(x)$ with $A(x), B(x) \in \mathbb Z[x]$, free terms of $A(x)$ and $B(x)$ positive, and
with $\deg(A) < \deg(B) = d$, one has the following {\bf div-mod representation}:
\begin{align*}
t(n) = \floor{ \frac{c^{n^2 + dn} A(c^{-n})}{c^{dn} B(c^{-n})} } \bmod c^n = 
\floor{ \frac{c^{n^2} \tilde A(c^{n})}{ \tilde B(c^n)} } \bmod c^n.    
\end{align*}  

Prunescu proved in \cite{prunescu2024representationscrecursive} the following \textbf{mod-mod representation}. Let $\alpha_d \not= 0$ be the  free term of the recurrence rule for a C-recursive sequence. Then there are $c, n_0 \in \mathbb N$ such that  for $n \geq n_0$ one has
\begin{align*}
t(n) = \frac{\left( \left( c^{n(d-1)+\ceil{n/2}} - \sgn(\alpha_d) \cdot  c^{n^2} \tilde A(c^n) \right) \bmod \tilde B(c^n) \right) \bmod c^n}{|\alpha_d|}
\end{align*}

This representation has the advantage that $(c^{n(d-1)+\ceil{n/2}} - \sgn(\alpha_d) \cdot  c^{n^2}\tilde A(c^n)) \bmod \tilde B(c^n)$ can be faster computed by modular arithmetic, but it has two disadvantages:
\begin{enumerate}
    \item[(i)] It needs in general a correction term $c^{n(d-1)+\ceil{n/2}}$ in the inner-most exponential polynomial.
    \item[(ii)] It holds only ultimately, for $n \geq $ some an a priori undetermined $n_0 \in \mathbb{N}$. In fact such an $n_0$ can be determined by conditions used in the proof, but it is not clear that one can always take $n_0 = 1$. 
\end{enumerate}
In the following we show that this result can be improved to: 
{\it If the free term of $\tilde B(c^n)$ is $\alpha_d \neq 0$, there exists $e \geq c$ such that the following holds for all $n \geq 1$:}
\begin{align*}
t(n) = \frac{-1 -\sign(\alpha_d)}{2} + \frac{1}{|\alpha_d|} \left( \left( (- \sign (\alpha_d) \cdot  e^{n^2} \tilde A(e^n))  \bmod \tilde  B(e^n) \right)  \bmod e^n \right).
\end{align*} 

Moreover, this improvement is done by a short-cut of modular arithmetic. The result becomes only a corollary to the div-mod representation. 

Other possibilities and methods to represent C-recursive sequences can be found in the monograph of Kauers and Paule \cite{kauerspaule}.

\section{Technical preparations}\label{SectionPrerequisites} 
We define $\mathbb{N}$ as the set of natural numbers with $0$ included.

The first three useful results refer to C-recursive sequences. The next Lemma can be also found in Everest and al \cite{everestandal}.

\begin{lemma}\label{LemmaGeneralInequality} [Prunescu \& Sauras-Altuzarra, Lemma 4, \cite{prunescusauras2024representationcrecursive}]
If $ t : \mathbb{N} \rightarrow {\mathbb{C}} $ is C-recursive, then there is an integer $ g \geq 1 $ such that $ | t(n) | < g^{n+1} $ for every integer $ n \geq 0 $. \end{lemma}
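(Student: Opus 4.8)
The plan is to prove the bound by a direct strong induction on the recurrence itself, rather than passing through the closed-form (characteristic-root) expression for $t(n)$; this keeps the argument uniform for every $n \geq 0$ and avoids separating off an ``eventual'' regime. First I would rewrite the order-$d$ recurrence in the explicit solved form
$$t(n+d) = -a_1 t(n+d-1) - \dots - a_d t(n),$$
and introduce two real constants capturing the data of the problem: the coefficient bound $M := \max\{1, |a_1|, \dots, |a_d|\}$ and the initial-segment bound $C := \max\{|t(0)|, |t(1)|, \dots, |t(d-1)|\}$. The claim is then that any integer $g \geq \max\{C, M\} + 1$ works.

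For the base cases $0 \leq n \leq d-1$, I would note that $|t(n)| \leq C < g \leq g^{n+1}$, so the desired inequality $|t(n)| < g^{n+1}$ already holds on the initial segment. For the inductive step, assuming $|t(k)| < g^{k+1}$ for all $k < n+d$ with $n \geq 0$, the triangle inequality gives
$$|t(n+d)| \leq \sum_{i=1}^{d} |a_i|\,|t(n+d-i)| < \sum_{i=1}^{d} |a_i|\, g^{n+d-i+1} = g^{n+d+1} \sum_{i=1}^{d} |a_i|\, g^{-i}.$$
The key estimate is that the trailing geometric-type sum is at most $1$: bounding each $|a_i|$ by $M$ and summing the geometric series yields
$$\sum_{i=1}^{d} |a_i|\, g^{-i} \leq M \sum_{i=1}^{d} g^{-i} = M\cdot \frac{1 - g^{-d}}{g-1} < \frac{M}{g-1} \leq 1,$$
where the final inequality uses $g \geq M+1$. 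Hence $|t(n+d)| < g^{n+d+1}$, which closes the induction and establishes $|t(n)| < g^{n+1}$ for every $n \geq 0$.

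I do not expect a genuine obstacle here; the only thing requiring care is the simultaneous choice of $g$. It must be large enough to dominate the initial data $C$ (for the base cases) and large enough that $M/(g-1) \leq 1$ (for the inductive geometric bound), so $g \geq \max\{C, M\}+1$ is forced; passing to an integer $g$ is harmless since $C$ and $M$ are merely real. The factor $1 - g^{-d} < 1$ is precisely what keeps the final inequality strict, matching the strict ``$<$'' in the statement. I would also remark that the same constants work verbatim when the $a_i$ and the values $t(n)$ are complex, since only the real quantities $|a_i|$ and $|t(n)|$ enter the estimate, so the hypothesis $t : \mathbb{N} \to \mathbb{C}$ causes no extra difficulty. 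As an alternative one could expand $t(n)$ in characteristic roots $\lambda_j$ as $\sum_j p_j(n)\lambda_j^n$ and bound by any $g > \max_j |\lambda_j|$, but that route naturally produces only an eventual bound and then requires handling finitely many initial terms separately, so the direct induction is preferable.
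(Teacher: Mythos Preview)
Your induction argument is correct and self-contained. The paper, however, does not supply its own proof of this lemma: it is quoted with attribution to Prunescu and Sauras-Altuzarra (their Lemma~4) and cross-referenced to Everest et al., so there is no ``paper's proof'' to compare against. Your direct strong-induction route, with the single choice $g \geq \max\{C,M\}+1$ controlling both the initial segment and the geometric tail, is a perfectly standard and clean way to establish the bound; the only microscopic nit is that the strict inequality in the inductive step tacitly assumes some $a_i \neq 0$, but the degenerate case where all $a_i$ vanish gives $t(n+d)=0$ and is trivial anyway.
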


\begin{theorem}\label{ThmExtraction1} [Prunescu \& Sauras-Altuzarra, Theorem 1 and 2, \cite{prunescusauras2024representationcrecursive}] If $ t : \mathbb N \rightarrow \mathbb{N} $, $f(z)$ is its generating function, $ R $ is the radius of convergence of $ f(z) $ at zero and $ c $, $ m $ and $ n $ are three integers such that $ c \geq 2 $, $ n \geq m \geq 2 $, $ c^{- m} < R $ and $ t ( n ) < c^{n - 2} $ for every integer $ n \geq m $, then $$t(n) = \floor{ c^{n^2} f ( c^{- n} ) } \bmod c^n . $$ 
Also, if $ c \geq 8 $, $c^{-1} < R$ and $ t ( n ) < c^{n / 3} $ for every  $ n \geq 1 $, then the representation works for every $n \geq 1$. 
\end{theorem}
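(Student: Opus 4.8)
The plan is to expand $c^{n^2} f(c^{-n})$ as a series in integer powers of $c$ and to separate it into an integer ``head'' whose residue modulo $c^n$ is exactly $t(n)$, plus a fractional ``tail'' lying in $[0,1)$ that the floor discards. Since $n \ge m$ and $c^{-m} < R$ (respectively $n \ge 1$ and $c^{-1} < R$ in the second regime), the point $c^{-n}$ lies inside the disk of convergence, so I may evaluate the generating function termwise:
$$c^{n^2} f(c^{-n}) = \sum_{k \ge 0} t(k)\, c^{n(n-k)}.$$
I would then split this sum at $k=n$ into three parts: the head $\sum_{k=0}^{n-1} t(k)\,c^{n(n-k)}$, the central term $t(n)\,c^{0}=t(n)$, and the tail $\theta := \sum_{j \ge 1} t(n+j)\,c^{-nj}$.

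The head is a non-negative integer multiple of $c^n$: its smallest exponent, attained at $k=n-1$, equals $n(n-(n-1))=n$, so every summand is divisible by $c^n$ and I may write the head as $c^n M$ with $M \in \mathbb{N}$. Thus
$$c^{n^2} f(c^{-n}) = c^n M + t(n) + \theta,$$
and the whole argument reduces to showing $\theta \in [0,1)$ together with $t(n) < c^n$.

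The crux is the tail estimate. Non-negativity of $\theta$ is immediate from $t \ge 0$; the work is the upper bound, where the growth hypothesis enters through a geometric series. In the first regime every index $n+j \ge m$ obeys $t(n+j) < c^{(n+j)-2}$, giving
$$\theta < \sum_{j \ge 1} c^{(n+j)-2}\,c^{-nj} = c^{n-2} \sum_{j \ge 1} c^{-j(n-1)} = \frac{c^{n-2}}{c^{n-1}-1},$$
and I would check that $c^{n-2}(c-1) \ge 1$ for all $c \ge 2$, $n \ge 2$, so that this bound is at most $1$ (the boundary case $c=2$, $n=2$ giving exactly $1$, while the strict per-term inequalities keep $\theta$ itself strictly below $1$). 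In the second regime I repeat the computation with $t(n+j) < c^{(n+j)/3}$, obtaining
$$\theta < c^{n/3} \sum_{j \ge 1} c^{-j(n-1/3)} = \frac{c^{n/3}}{c^{n-1/3}-1} = \frac{1}{c^{(2n-1)/3}-c^{-n/3}},$$
whose denominator $c^{(2n-1)/3}-c^{-n/3}$ increases with $n$; its minimum over $n \ge 1$ is attained at $n=1$, where it equals $c^{1/3}-c^{-1/3} \ge 2-\tfrac12=\tfrac32$ for $c \ge 8$, whence $\theta < \tfrac23 < 1$ throughout.

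Finally I would assemble the pieces. Because $t(n) < c^{n-2} < c^n$ (respectively $t(n) < c^{n/3} < c^n$) and $\theta < 1$, the quantity $t(n)+\theta$ lies in $[0,c^n)$, so taking the floor of $c^n M + t(n) + \theta$ strips off exactly $\theta$ and yields $c^n M + t(n)$; reducing modulo $c^n$ then returns $t(n)$, as claimed. The main obstacle is entirely in the tail bound: one must verify the geometric estimate stays strictly below $1$ at the extremal parameters, which is precisely where the thresholds $c \ge 2,\ n \ge 2$ and $c \ge 8,\ n \ge 1$ are forced.
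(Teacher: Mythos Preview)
The paper does not supply a proof of this theorem: it is quoted from \cite{prunescusauras2024representationcrecursive} as a known result (Theorems~1 and~2 there), so there is no ``paper's own proof'' to compare against. Your argument is the natural one and is correct. The decomposition into head $c^nM$, middle term $t(n)$, and tail $\theta$ is exactly what the generating-function identity forces, and the two geometric tail estimates are the points where the hypotheses $t(n)<c^{n-2}$ (resp.\ $t(n)<c^{n/3}$) and the thresholds $c\ge 2,\ n\ge 2$ (resp.\ $c\ge 8,\ n\ge 1$) are used; you handle the borderline case $c=2,\ n=2$ correctly by observing that integrality of $t(n+j)$ turns the strict per-term bound $t(n+j)<c^{n+j-2}$ into $t(n+j)\le c^{n+j-2}-1$, forcing $\theta$ strictly below~$1$. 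One minor remark: absolute convergence of the series at $c^{-n}$ is needed to justify the termwise manipulation, but since all terms are non-negative this is automatic once $c^{-n}<R$, which you note.
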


The following Corollary follows easily from \cref{ThmExtraction1}.

\begin{corollary}\label{CorBiggerBase}
If the representation stated in \cref{ThmExtraction1} holds for some $c \in \mathbb N$ for all $n \geq m$, then it holds also if we replace $c$ with any $d \geq c$ for all $n \geq m$.
\end{corollary}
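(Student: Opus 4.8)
The plan is to observe that the representation in \cref{ThmExtraction1} is governed by a list of hypotheses on the base, each of which is preserved under passing to a larger base; once this monotonicity is established, the corollary follows by a verbatim reapplication of the theorem to $d$.

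First I would fix a base $c$ and an index $m$ for which the hypotheses of \cref{ThmExtraction1} hold, so that the representation $t(n) = \floor{c^{n^2} f(c^{-n})} \bmod c^n$ is valid for all $n \geq m$. In the first part of the theorem these hypotheses are the conjunction of the four conditions $c \geq 2$, $c^{-m} < R$, $n \geq m \geq 2$, and $t(n) < c^{n-2}$ for every integer $n \geq m$. I would then check that each of these survives the replacement of $c$ by any integer $d \geq c$, keeping the same $m$. The conditions $c \geq 2$ and $c^{-m} < R$ become $d \geq 2$ and $d^{-m} \leq c^{-m} < R$, both immediate from $d \geq c$, while the condition $n \geq m \geq 2$ involves only $n$ and $m$ and is untouched.

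The one point deserving a word is the growth bound. Since $n \geq m \geq 2$, the exponent satisfies $n - 2 \geq 0$, whence $c^{n-2} \leq d^{n-2}$, and therefore $t(n) < c^{n-2} \leq d^{n-2}$ for every $n \geq m$. Thus all four hypotheses hold for the pair $(d, m)$, and \cref{ThmExtraction1} applies directly with base $d$, yielding $t(n) = \floor{d^{n^2} f(d^{-n})} \bmod d^n$ for all $n \geq m$, which is exactly the assertion.

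I do not anticipate a genuine obstacle: the entire content is the monotonicity of the exponential bound in the base, which rests only on the nonnegativity of the exponent $n - 2$ guaranteed by $m \geq 2$. The same argument applies \emph{mutatis mutandis} to the alternative set of hypotheses in the second part of \cref{ThmExtraction1} (with $c \geq 8$, $c^{-1} < R$, and $t(n) < c^{n/3}$ for $n \geq 1$), since there the relevant exponent $n/3$ is likewise nonnegative for every $n \geq 1$, so that $c^{n/3} \leq d^{n/3}$ and the growth bound again transfers.
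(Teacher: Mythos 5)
Your proof is correct and is exactly the argument the paper leaves implicit when it says the corollary ``follows easily'' from \cref{ThmExtraction1}: one reads ``the representation holds for $c$'' as ``the hypotheses of the theorem are met for $c$,'' and each hypothesis is monotone under enlarging the base, the only point of substance being the nonnegativity of the exponents $n-2$ (resp.\ $n/3$), which you verify. No gaps; this matches the intended (omitted) proof.
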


The next three Lemmas are easy remarks of modular arithmetic. 

\begin{lemma} \label{lemma1}
If $B \geq 2$, $A \geq 1$, $B \nmid A$, then $- \floor{-A/B} = \floor{A/B} + 1$.
\end{lemma}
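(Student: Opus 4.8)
The plan is to reduce everything to the Euclidean division of $A$ by $B$. First I would write $A = qB + r$, where $q = \floor{A/B}$ is the quotient and $r$ is the remainder with $0 \leq r < B$. The single place the hypothesis $B \nmid A$ enters is here: it rules out $r = 0$, so in fact $1 \leq r \leq B-1$, and in particular $0 < B - r < B$. (The size hypotheses $A \geq 1$ and $B \geq 2$ are not essential to the identity itself, which holds for any non-integer ratio; they merely fix the context in which the lemma is applied.)

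Next I would put $-A$ into the same normal form. From $A = qB + r$ one gets $-A = -(q+1)B + (B-r)$, and since the remainder $B-r$ satisfies $0 < B-r < B$, this displays the Euclidean division of $-A$ by $B$ with quotient $-(q+1)$ and remainder $B-r$. Hence $\floor{-A/B} = -(q+1) = -q-1$. Combining the two computations gives $-\floor{-A/B} = q+1 = \floor{A/B} + 1$, which is the claim. Equivalently, one can route the argument through the standard identities $\floor{-x} = -\ceil{x}$ and $\ceil{x} = \floor{x}+1$ for non-integer $x$, applied to $x = A/B$; the Euclidean-division computation above is simply the self-contained unpacking of these.

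There is no real obstacle here, since the statement is an elementary floor identity. The only point requiring any care is to invoke $B \nmid A$ precisely to obtain the strict inequality $0 < B - r$: this is exactly what prevents $\floor{-A/B}$ from collapsing to $-q$ and is what produces the extra $+1$ on the right-hand side.
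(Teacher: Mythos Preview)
Your proof is correct. The paper does not supply a proof of this lemma at all; it is listed among three ``easy remarks of modular arithmetic'' and left to the reader, so your Euclidean-division argument (equivalently, the $\ceil{x}=\floor{x}+1$ identity for non-integer $x$) is exactly the kind of routine verification the authors had in mind.
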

\begin{lemma} \label{lemma2}
If $a, y \in \mathbb{N}$ and $0 \leq (a y) < C$, then $\left( (ay) \bmod C\right) = a ( y \bmod C )$.
\end{lemma}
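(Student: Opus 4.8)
The plan is to collapse both sides of the claimed identity to the single integer $ay$ and observe that they then coincide. The first step exploits the hypothesis directly: since $0 \leq ay < C$, the product $ay$ already lies in the canonical residue range $\{0, 1, \dots, C-1\}$ modulo $C$, so the left-hand side simplifies immediately to $(ay) \bmod C = ay$. It therefore suffices to prove that the right-hand side equals the same quantity, i.e. that $a\,(y \bmod C) = ay$, and the whole lemma reduces to this one reduction.

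For that reduction I would split on whether $a$ vanishes, which is legitimate because the paper's convention puts $0 \in \mathbb{N}$, so $a = 0$ is an admissible value. If $a = 0$, then both $a\,(y \bmod C)$ and $ay$ are $0$, so equality holds trivially, no matter how large $y$ is relative to $C$. If instead $a \geq 1$, then from $y \geq 0$ we get $y \leq ay$, and combining this with $ay < C$ forces $y < C$; hence $y$ is its own residue, $y \bmod C = y$, and multiplying through by $a$ yields $a\,(y \bmod C) = ay$, which is exactly what is needed.

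The one place that genuinely requires care — and the reason the case split cannot be avoided — is that the hypothesis $ay < C$ does \emph{not} by itself guarantee $y < C$: when $a = 0$ the product $ay$ is $0$ irrespective of the size of $y$, so $y \bmod C$ may legitimately differ from $y$. This apparent obstruction is harmless precisely because any such discrepancy is annihilated by the factor $a = 0$, which is what the first case records. With both cases dispatched, the right-hand side equals $ay$, matching the simplified left-hand side, and the identity follows.
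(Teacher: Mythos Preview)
Your argument is correct. The paper does not actually supply a proof of this lemma; it simply lists it among three ``easy remarks of modular arithmetic'' and moves on, so there is nothing to compare against. Your careful handling of the edge case $a = 0$ (which the hypothesis permits and which blocks the shortcut $y < C$) is the only subtlety, and you treat it properly.
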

\begin{lemma} \label{lemma3}
If $x \not\equiv (C-1) \pmod{C}$ then $(x+1) \bmod C = (x \bmod C) + 1$.
\end{lemma}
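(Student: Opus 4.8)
The plan is to unfold the definition of the $\bmod$ operation through the uniqueness of Euclidean division. First I would write $x = qC + r$, where $q = \floor{x/C}$ and $r = x \bmod C$ is the canonical remainder satisfying $0 \leq r \leq C-1$. The hypothesis $x \not\equiv (C-1) \pmod{C}$ says precisely that $r \neq C-1$, and since $r$ is an integer in $[0, C-1]$, this forces $r \leq C-2$. The entire content of the lemma will then flow from adding $1$ to this decomposition.

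Next I would form $x + 1 = qC + (r+1)$ and check that $r+1$ is again a valid canonical remainder, i.e.\ that $0 \leq r+1 \leq C-1$. The lower bound is immediate, and the upper bound holds exactly because $r \leq C-2$ gives $r+1 \leq C-1$. Since the representation $x+1 = q'C + r'$ with $0 \leq r' \leq C-1$ is unique, and we have exhibited one with quotient $q$ and remainder $r+1$, we conclude $(x+1) \bmod C = r+1 = (x \bmod C) + 1$, as claimed.

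The only point requiring any attention is the verification of the range constraint $r + 1 \leq C-1$, and this is exactly where the hypothesis is consumed: it excludes the carry case $r = C-1$, in which adding $1$ would roll the remainder over to $0$ while incrementing the quotient to $q+1$, breaking the identity. I do not expect a genuine obstacle here, as the statement is a direct consequence of the uniqueness of division with remainder; the proof is essentially the one-line observation that an increment that does not trigger a carry leaves the quotient fixed and raises the remainder by one.
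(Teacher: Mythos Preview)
Your proof is correct. The paper itself does not supply a proof of this lemma at all (it is listed among three ``easy remarks of modular arithmetic'' and stated without proof), so your argument via the uniqueness of Euclidean division simply fills in the omitted details in the natural way.
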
 

The next Lemma is the principal tool of this note.

\begin{lemma} \label{LemmaMain}
Let $A, B, C \in \mathbb{Z}$ such that $A, B > 0$, $C \geq 2$, $C \,|\,A$, $B \nmid A$, $B \bmod C \equiv a \mod C$, $a \not= 0$ and $|a| (\floor{A / B} \bmod C) < C$ if $a < 0$ respectively $a + a (\floor{A / B} \bmod C) < C$ if $a > 0$.
\begin{enumerate}
    \item[(i)] If $a < 0$, then
    \begin{align*}
        (A \bmod B) \bmod C = |a| (\floor{A / B} \bmod C) .
    \end{align*}
    \item[(ii)] If $a > 0$, then
    \begin{align*}
     \left( (-A) \bmod B \right) \bmod C = a (1 + \floor{A/B} \bmod C ) . 
    \end{align*}
\end{enumerate}
\end{lemma}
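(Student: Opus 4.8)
The plan is to reduce both parts to the single division identity $A = B\floor{A/B} + (A \bmod B)$ followed by reduction modulo $C$. Throughout I write $q := \floor{A/B}$ and $\bar q := q \bmod C$, so that $\bar q \in \{0, 1, \dots, C-1\}$.

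For part (i), I would start from $A \bmod B = A - Bq$ and reduce modulo $C$. The hypotheses $C \mid A$ and $B \equiv a \pmod C$ give $A \bmod B \equiv -aq \pmod C$; since $a < 0$ this is $|a|q \equiv |a|\bar q \pmod C$. Because $|a| \geq 1$ and $\bar q \geq 0$, the number $|a|\bar q$ is non-negative, and the hypothesis $|a|\bar q < C$ places it in $[0, C)$. As $(A \bmod B) \bmod C$ is by definition the unique element of $[0,C)$ congruent to $A \bmod B$, it must equal $|a|\bar q$, which is the claim.

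For part (ii), the only additional ingredient is the floor of a negative quotient. Here $B \nmid A$ (together with $B \geq 2$, $A \geq 1$) lets me invoke \cref{lemma1} to get $\floor{-A/B} = -q - 1$, hence $(-A)\bmod B = -A - B\floor{-A/B} = -A + B(q+1)$. Reducing modulo $C$ with $A \equiv 0$ and $B \equiv a$ gives $(-A)\bmod B \equiv a(q+1) \equiv a(1 + \bar q) \pmod C$. Since $a > 0$ the quantity $a(1+\bar q)$ is strictly positive, and the hypothesis $a + a\bar q < C$ puts it in $[0,C)$; by the same uniqueness argument it is the canonical residue, proving the formula.

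I expect no genuine obstacle here: the whole argument is two congruences plus a single appeal to \cref{lemma1}. The one point requiring care --- and the reason the two size conditions are stated separately --- is that $a$ is a \emph{signed} representative of $B \bmod C$, not the canonical one, so in each case I must verify that the target expression is both non-negative and strictly below $C$ before identifying it with the canonical residue. This sign bookkeeping, rather than any deep step, is where an error would most likely creep in.
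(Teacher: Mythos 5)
Your proof is correct and follows essentially the same route as the paper's: reduce the division identity $A = B\floor{A/B} + (A\bmod B)$ modulo $C$ using $C \mid A$ and $B \equiv a \pmod C$, invoke \cref{lemma1} for $\floor{-A/B}$ in part (ii), and use the stated size bounds to identify the result with the canonical residue in $[0,C)$. The paper packages that last identification via its \cref{lemma2} and \cref{lemma3} instead of appealing directly to uniqueness of the residue, but this is only a cosmetic difference.
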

\begin{proof}
For both cases below we apply the Lemmas \ref{lemma1} and \ref{lemma2}. For the second case we apply also \cref{lemma3}. We proceed with the cases.
\begin{enumerate}
\item[(i)] $a < 0$:
\begin{align*}
(A \bmod B) \bmod C
&= \left( A \bmod C - (B \bmod C) ( \floor{A / B} \bmod C) \right) \bmod C \\
&= \left( 0 - (- |a| ) ( \floor{A / B} \bmod C) \right) \bmod C \\ 
&= \left( |a|  ( \floor{A / B} \bmod C) \right) \bmod C \\
&= |a| (  \floor{A / B} \bmod C )
\end{align*}
\item[(ii)] $a > 0$:
\begin{align*}
\left( (-A) \bmod B \right) \bmod C
&= \left( (-A) \bmod C - (B \bmod C) ( \floor{A / B} \bmod C) \right) \bmod C \\
&= \left( 0 - a ( \floor{(-A) / B} \bmod C) \right) \bmod C \\ 
&= ( a \cdot ( - \floor{(-A) / B} \bmod C ) ) \bmod C \\ 
&= ( a \cdot (1 +  \floor{A / B} \bmod C ) ) \bmod C \\
&= ( a + a  \floor{A / B} \bmod C ) \bmod C \\
&= a ( 1 +  \floor{A / B} \bmod C ) .
\end{align*}
\end{enumerate}
\end{proof}

\section{Applications to C-recursive sequences}\label{SectionApplications}

\begin{theorem}\label{TheoApplication}
Suppose that a sequence $t : \mathbb N \rightarrow \mathbb N$ has the following representation for $n \geq 1$:
\begin{align*}
t(n) = \floor{\frac{c^{n^2}\tilde A(c^n)}{\tilde B(c^n)}} \bmod c^n .
\end{align*}

If the free term of $\tilde B(c^n)$ is $\alpha_d < 0$, then there is some natural number $e$ such that for all $n \geq 1$:
\begin{align*}
t(n) =  \frac{1}{|\alpha_d|} \left(\left((e^{n^2} \tilde A(e^n)) \bmod \tilde B(e^n) \right) \bmod e^n \right).     
\end{align*}
If the free term of $\tilde B(c^n)$ is $\alpha_d > 0$, then there is some natural number $e$ such that for all  $n \geq 1$:
\begin{align*}
t(n) = -1 + \frac{1}{\alpha_d} \left( \left( (- e^{n^2} \tilde A(e^n))  \bmod \tilde  B(e^n) \right)  \bmod e^n \right).
\end{align*} 
\end{theorem}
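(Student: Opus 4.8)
The plan is to obtain both formulas as a single, direct application of \cref{LemmaMain} under the dictionary $A = e^{n^2}\tilde A(e^n)$, $B = \tilde B(e^n)$, $C = e^n$, and $a = \alpha_d$. With this dictionary the div-mod representation, read off for the base $e$, says precisely that $\floor{A/B} \bmod C = t(n)$; so as soon as the hypotheses of \cref{LemmaMain} are met, case (i) gives $(A \bmod B)\bmod C = |\alpha_d|\,t(n)$ and case (ii) gives $((-A)\bmod B)\bmod C = \alpha_d(1+t(n))$, and dividing by $|\alpha_d|$ (respectively by $\alpha_d$ and subtracting $1$) yields the two claimed identities verbatim. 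Thus the entire content of the theorem reduces to verifying that a suitable base $e$ makes every hypothesis of \cref{LemmaMain} hold simultaneously for all $n \geq 1$.

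First I would fix the base. Since $t$ is C-recursive, \cref{LemmaGeneralInequality} provides $g \geq 1$ with $|t(n)| < g^{n+1}$ for all $n$, and by \cref{CorBiggerBase} the div-mod representation persists when $c$ is replaced by any $e \geq c$. I would then take $e \geq c$ large and additionally coprime to $\alpha_d$ (possible, as there are arbitrarily large such integers). For such $e$ the routine hypotheses are immediate: $C = e^n \geq 2$; $C \mid A$ because $n^2 \geq n$; $A = e^{n^2}\tilde A(e^n) > 0$ and $B = \tilde B(e^n) > 0$ because the leading coefficients of $\tilde A$ and $\tilde B$ are the free terms of $A(x)$ and $B(x)$, positive in the setup of the div-mod representation; and $B \equiv \alpha_d \pmod{C}$ because the free term of $\tilde B$ is $\alpha_d$ while every other term of $\tilde B(e^n)$ is a multiple of $e^n$. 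The inequality hypotheses, $|\alpha_d|\,t(n) < e^n$ for $\alpha_d<0$ and $\alpha_d(1+t(n)) < e^n$ for $\alpha_d>0$, follow from $t(n) < g^{n+1}$: it suffices that $(e/g)^n > |\alpha_d|g$ (respectively $\alpha_d(1+g^{n+1}) < e^n$) for every $n \geq 1$, which for $e > g$ is worst at $n=1$ and so holds once $e$ exceeds an explicit bound in $|\alpha_d|$ and $g$.

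The one hypothesis needing a genuine argument, which I expect to be the main obstacle, is $B \nmid A$. Here I would exploit the structure of $\tilde A$: since $\deg A < \deg B = d$, the reciprocal polynomial $\tilde A(y) = y^d A(1/y)$ has zero constant term, so $\tilde A(e^n) = e^n\,\tilde A'(e^n)$ with $\deg \tilde A' = d-1$, whence $A = e^{n^2+n}\,\tilde A'(e^n)$. Because $B \equiv \alpha_d \pmod e$ and $e$ was chosen coprime to $\alpha_d$, we have $\gcd(B,e)=1$ and hence $\gcd(B, e^{n^2+n}) = 1$; consequently $B \mid A$ if and only if $B \mid \tilde A'(e^n)$. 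Finally, comparing degrees $d-1 < d$, for $e$ large one has $0 < |\tilde A'(e^n)| < \tilde B(e^n) = B$ for all $n \geq 1$ (worst case $n=1$, the gap widening with $n$), so a nonzero integer of absolute value below $B$ cannot be a multiple of $B$, giving $B \nmid A$. The only case this misses is $\tilde A' \equiv 0$, i.e. $t \equiv 0$, where both sides of each identity vanish and nothing is to prove.

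With all hypotheses in place, I would conclude by invoking \cref{LemmaMain}(i) when $\alpha_d<0$ and \cref{LemmaMain}(ii) when $\alpha_d>0$, substituting $\floor{A/B}\bmod C = t(n)$, and rearranging as above. The upshot is that the theorem really is a corollary of the div-mod representation together with the elementary modular identity of \cref{LemmaMain}, the only substantive work being the coprimality-and-degree bookkeeping that secures $B \nmid A$ uniformly in $n$.
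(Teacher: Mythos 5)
Your proposal follows essentially the same route as the paper: invoke \cref{CorBiggerBase} to enlarge the base, set $A = e^{n^2}\tilde A(e^n)$, $B = \tilde B(e^n)$, $C = e^n$, verify the hypotheses of \cref{LemmaMain} for all $n \geq 1$ using the growth bound from \cref{LemmaGeneralInequality}, and read off the two formulas from the two cases of that lemma. The only difference is that you supply a genuine argument (coprimality of $e$ with $\alpha_d$ together with the degree comparison between $\tilde A$ and $\tilde B$) for the hypothesis $B \nmid A$, which the paper simply asserts is ``always fulfilled''; this is extra precision rather than a different method.
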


\begin{proof}
    In order to apply apply Lemma \ref{LemmaMain}, we recall that according to Corollary \ref{CorBiggerBase} there is a $c_0 \in \mathbb N$ such that for all $e \geq c_0$ and
    for all $n \geq 1$,
    \begin{align*}
t(n) = \floor{\frac{e^{n^2}\tilde A(e^n)}{\tilde B(e^n)}} \bmod e^n .
\end{align*}
Let $e \geq c_0$ be a natural number to fulfill also other conditions, which will be stated below. We introduce the notations $A = e^{n^2}\tilde A(e^n)$, $B = \tilde B(e^n)$ and $C = e^n$ and we prove that for a good choice of $e$, these numbers fulfill the conditions of Lemma \ref{LemmaMain}.

The conditions  $C \,|\,A$ and $B \nmid A$ are always fulfilled. The conditions $A, B > 0$, $C \geq 2$ are fulfilled for all $n \geq 1$ if $d$ is sufficiently large, as the main coefficients of $\tilde A(x)$ and $\tilde B(x)$ are positive. Let $\alpha_d$ be the free term of the polynomial $\tilde B(x) = x^d B(\frac{1}{x})$, meaning that $a = a_d$. For $d$ sufficiently large, $B \bmod C \equiv \alpha_d \mod C$. By definition, $\alpha_d \neq 0$ because $\deg B = d$. We have to show that for $e$ sufficiently large,  $|\alpha_d| (\floor{A / B} \bmod C) < C$ if $\alpha_d < 0$ respectively $\alpha_d + \alpha_d (\floor{A / B} \bmod C) < C$ if $\alpha_d > 0$, for all $n \geq 1$. We recall that $\floor{A/B} \bmod C = t(n)$. 

If $\alpha_d < 0$, the inequality $|\alpha_d| (\floor{A / B} \bmod C) < C$ reads $|\alpha_d| t(n) < e^n$ and must be true for $n \geq 1$.

If $\alpha_d > 0$, the inequality $\alpha_d + \alpha_d (\floor{A / B} \bmod C) < C$ reads $\alpha_d (t(n) + 1) < e^n$ and must be true for $n \geq 1$.

But we know from \ref{LemmaGeneralInequality} that if $s: \mathbb N \rightarrow \mathbb C$ is a C-recursive sequence then there exists an $g \in \mathbb N$ such that for all $n \in \mathbb N$, $|s(n)| < g^{n+1}$. As $s(n) = 2|\alpha_d| t(n)$ is itself a C-recursive sequence and its values are natural numbers, there is some positive $g \in \mathbb N$ such that $2|\alpha_n| t(n) < g^{n+1}$ for all $n \geq 1$. We can always find an $e \in \mathbb N$ such that $g^{n+1} < e^n$ for all $n \geq 1$. 

If $t(n) \geq 1$, as $2|\alpha_d| t(n) > |\alpha_d| t(n)$ in the first case, respectively $2 |\alpha_d| t(n) \geq |\alpha_d|(1 + t(n)) $ in the second case, the conditions of Lemma \ref{LemmaMain} are fulfilled. If $t(n) = 0$, the conditions are fulfilled for every $e \geq 1$ in the first case and for every $e > \alpha_d$ in the second case. 

Finally we choose $e$ sufficiently large such that $e \geq c_0$ and all the conditions above are fulfilled. 
\end{proof} 

Below we put both cases in only one formula:

\begin{corollary}\label{CorOneFormula}
    Suppose that a sequence $t : \mathbb N \rightarrow \mathbb N$ has the following representation for $n \geq 1$:
\begin{align*}
t(n) = \floor{\frac{c^{n^2}\tilde A(c^n)}{\tilde B(c^n)}} \bmod c^n .
\end{align*} 
If the free term of $\tilde B(c^n)$ is $\alpha_d \neq 0$, there exists $e \geq c$ such that the following holds for all $n \geq 1$:
\begin{align*}
t(n) = \frac{-1 -\sign(\alpha_d)}{2} + \frac{1}{|\alpha_d|} \left( \left( (- \sign (\alpha_d) \cdot  e^{n^2} \tilde A(e^n))  \bmod \tilde  B(e^n) \right)  \bmod e^n \right).
\end{align*} 
\end{corollary}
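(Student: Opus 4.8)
The plan is to obtain \Cref{CorOneFormula} as a direct bookkeeping consequence of \Cref{TheoApplication}, which already splits into the two sign-cases. The only real task is to verify that the single formula specializes, case by case, to the two formulas proved there, and to confirm that a single $e$ can be chosen that works for whichever sign $\alpha_d$ happens to have. Since $\alpha_d \neq 0$ is fixed by the hypothesis $\deg \tilde B = d$, exactly one of the two branches of \Cref{TheoApplication} applies, so I would simply invoke that theorem to produce an $e \geq c$ (enlarging $c_0$ if necessary so that $e \geq c$ as well, which is harmless by \Cref{CorBiggerBase}) and then check the algebra.

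First I would treat the case $\alpha_d < 0$. Here $\sign(\alpha_d) = -1$, so the leading additive term in the corollary becomes $\frac{-1-(-1)}{2} = 0$, and the factor $-\sign(\alpha_d) = +1$ multiplying $e^{n^2}\tilde A(e^n)$ disappears. Thus the formula collapses to
\begin{align*}
t(n) = \frac{1}{|\alpha_d|}\left(\left((e^{n^2}\tilde A(e^n)) \bmod \tilde B(e^n)\right) \bmod e^n\right),
\end{align*}
which is precisely the first displayed conclusion of \Cref{TheoApplication}. Next I would treat the case $\alpha_d > 0$. Here $\sign(\alpha_d) = +1$, so the additive term is $\frac{-1-1}{2} = -1$, the factor $-\sign(\alpha_d) = -1$ inserts the required negative sign in front of $e^{n^2}\tilde A(e^n)$, and $|\alpha_d| = \alpha_d$. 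The formula then reads
\begin{align*}
t(n) = -1 + \frac{1}{\alpha_d}\left(\left((-e^{n^2}\tilde A(e^n)) \bmod \tilde B(e^n)\right) \bmod e^n\right),
\end{align*}
matching the second conclusion of \Cref{TheoApplication} verbatim.

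Having matched both branches, the proof is complete once I note that the existential $e$ supplied by \Cref{TheoApplication} already satisfies all the modular-arithmetic conditions for the relevant sign, and that its defining inequalities ($g^{n+1} < e^n$ for all $n \geq 1$, together with $e \geq c_0 \geq c$) depend only on the fixed sign of $\alpha_d$, not on any choice made inside the unified formula. I do not expect a genuine obstacle here: the content is entirely in \Cref{TheoApplication}, and the corollary is a cosmetic unification. The one point deserving a sentence of care is confirming that the two additive constants $0$ and $-1$ are reproduced exactly by the single expression $\tfrac{-1-\sign(\alpha_d)}{2}$, and that the sign insertion $-\sign(\alpha_d)$ is consistent with how the inner argument $e^{n^2}\tilde A(e^n)$ (always positive, since the leading coefficient of $\tilde A$ is positive) enters each branch; both checks are immediate from the sign table above.
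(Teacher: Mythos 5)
Your proposal is correct and matches the paper's treatment: the paper gives no separate proof of \Cref{CorOneFormula}, presenting it as the immediate unification of the two cases of \Cref{TheoApplication}, which is exactly the case-by-case specialization you carry out. Your additional remark that $e \geq c$ can be ensured via \Cref{CorBiggerBase} is a sensible (and harmless) piece of bookkeeping that the paper leaves implicit.
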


\begin{remark}\label{oscillant} \rm In \cite{prunescu2024representationscrecursive} there is also an application to C-recursive sequences $t : \mathbb N \rightarrow \mathbb Z$. According to Lemma \ref{LemmaGeneralInequality},  for such a sequence there is a $h \in \mathbb N$ such that for all $n \geq 0$, $|t(n)| < h^{n+1}$. The sequence $s(n) = t(n) + h^{n+1}$ has values in $\mathbb N$ and is also C-recursive. Indeed, the generating function of $s$ is a rational function and can be computed by: 
$$f(z) + \frac{h}{1-hz},$$
where $f(z)$ is the generating function of the sequence $t$. Consequently, the sequence $s$ has a representation according to Corollary \ref{CorOneFormula}, while the sequence $t$ has the representation $t(n) = s(n) - h^{n+1}$.
\end{remark} 

In \cite{prunescu2024representationscrecursive}, also the following {\bf mod-div representation} is proved: {\it There exist $c$ and $n_0 \in \mathbb N$ such that:}
 $$\forall \, n\geq n_0\,\,\,\, t(n) = \left \lfloor 
    \frac{\left ( c^{n(d-2) + \left \lceil n/2 \right \rceil} + c^{n^2} \tilde A(b^n ) \right ) \bmod \tilde B(c^n)}{c^{n(d-1)}} \right \rfloor.$$
{\bf Open problem}: Is it possible to find a purely arithmetic trick which shows that the mod-div representation (or even an improved version) is only a corollary of the div-mod representation, in a similar way as the short-cut shown here? 

%%%%%%%%%%%%%%%%%%%%%%%%%%%%%%%%%%%%%%%%%%%%

\section{Examples}\label{SectionExamples}

In this section, all representations are derived from the respective representations in \cite{prunescusauras2024representationcrecursive}. In some cases a larger exponentiation base $e > c$ is needed in order to keep the representation true for all $n \geq 1$. 

\subsection{Degree 2, natural numbers, negative free term}

The first group of examples consists of sequences of order $2$ with $a < 0$. As in \cite{prunescu2024representationscrecursive} it was shown that this kind of sequence does not need inner correction term, these representations are not different from the representations displayed there. 

\begin{example} \label{ExFibonacci}
[\textbf{Fibonacci numbers}, OEIS \seqnum{A000045}]
% Verified. -Joseph
$$
\forall n \in \Z^+, \quad 
s(n) = \left( 3^{n^2 + n} \bmod (3^{2n} - 3^n - 1) \right) \bmod 3^n .
$$
\end{example}

\begin{example} \label{PropFormulaLucas}
[\textbf{Lucas numbers}, OEIS \seqnum{A000032}] The div-mod representation works for $c = 3$, see Prunescu and Sauras-Altuzarra \cite{prunescusauras2024representationcrecursive}. The mod-mod representation works for $e = 5$:
% Verified. -Joseph
$$
\forall n \in \Z^+, \quad 
s(n) = \left( ( 2 \cdot 5^{n^2 + 2n} - 5^{n^2 + n}) \bmod (5^{2n} - 5^n - 1) \right) \bmod 5^n .
$$
\end{example}

\begin{example} \label{PropFormulaPellNumbers}
[\textbf{Pell numbers}, OEIS \seqnum{A000129}]
% Verified. -Joseph
$$
\forall n \in \Z^+, \quad 
s(n) = \left( 3^{n^2 + n} \bmod (3^{2n} - 2 \cdot 3^n - 1) \right) \bmod 3^n .
$$
\end{example}

\begin{example} \label{PropFormulaPellLucasNumbers}
[\textbf{Pell-Lucas numbers}, OEIS \seqnum{A002203}]
% Verified. -Joseph
$$
\forall n \in \Z^+, \quad 
s(n) = \left( ( 2 \cdot 9^{n^2 + 2n} - 2 \cdot 9^{n^2 + n}) \bmod (9^{2n} - 2 \cdot 9^n - 1) \right) \bmod 9^n .
$$
\end{example}

\subsection{Degree 2, natural numbers, positive free term} 

\begin{example}\label{PropFormulaNaturals}[\textbf{Natural numbers}, OEIS \seqnum{A001477}] 
$$
\forall n \in \Z^+, \quad
\left( \left( (- 4^{n^2 + n} ) \bmod (4^{2n} - 2 \cdot 4^n + 1) \right) \bmod 4^n \right) - 1 = n .
$$
\end{example}
 
\begin{example}\label{PropFormulaAllTwos} [\textbf{All-twos}, OEIS \seqnum{A007395}]
$$
\forall n \in \Z^+, \quad
\left( \left( (-2 \cdot 4^{n^2 + 2n} + 2 \cdot 4^{n^2 + n} ) \bmod ( 4^{2n} - 2  \cdot 4^n + 1  ) \right) \bmod 4^n \right) -1 = 2 .
$$
\end{example}

\begin{example}\label{PropFormulaMersenne} [\textbf{Mersenne numbers}, OEIS \seqnum{A000225}] 
$$
\forall n \in \Z^+, \quad
\frac{1}{2} \cdot \left ( \left( (- 6 ^ {n ^ 2 + n} ) \bmod (6 ^ {2n} - 3 \cdot 6 ^ n + 2 ) \right) \bmod 6^n \right ) -1 = 2^n - 1 .
$$
\end{example} 

\begin{example} \label{PropFormula2tonplusone} [$2^n + 1$, OEIS \seqnum{A000051}] The div-mod representation works for $c = 6$, see Prunescu and Sauras-Altuzarra \cite{prunescusauras2024representationcrecursive}. The mod-mod representation works for $e = 9$:
$$
\forall n \in \Z^+, \quad 
\frac{1}{2} \cdot  \left ( (  - 2 \cdot  9^{n^2 + 2n} + 3 \cdot  9^{n^2 + n} ) \bmod ( 9^{2n}  -3 \cdot  9^n + 2 ) \bmod 9^n \right ) -1 = 2^n + 1 .
$$
\end{example}

\begin{example} \label{PropA001081} [OEIS \seqnum{A001081}, OEIS \seqnum{A001080}] Consider Pell's equation:
\begin{align} \label{EqPells}
X^2 - k Y^2 = 1 .
\end{align}
The sequence of solutions $(x(n), y(n))$ with $(x(0), y(0)) = (1,0)$ are known to be C-recursive sequences, see \cite{prunescusauras2024representationcrecursive}. It is proved there that the sequences $(x(n))$ and $(y(n))$ are C-recursive and enjoy the following representations:
$$x(n) = \left \lfloor \frac{b^{n^2 + 2n} - x(1) b^{n^2 + n}}{b^{2n} - 2 x(1) b^n + 1} \right \rfloor  \bmod b^n,$$
$$y(n) = \left \lfloor \frac{y(1) b^{n^2 + n}}{b^{2n} - 2 x(1) b^n + 1} \right \rfloor  \bmod b^n.$$

For $k = 7$, the fundamental solution is $(x(1), y(1)) = (8,3)$. If $n \in \Z^+$, then 
\begin{align*}
x(n) &= \left( \left( (-  143 ^ {n ^ 2 + 2n} +8 \cdot 143 ^ {n ^ 2 + n} ) \bmod(143 ^ {2n} -16 \cdot 143 ^ n + 1 ) \right) \bmod 143 ^ n \right) - 1 , \\
y(n) &= \left( \left( ( -3 \cdot 64 ^ {n ^ 2 + n}  ) \bmod (64 ^ {2n} -16 \cdot 64 ^ n + 1 ) \right) \bmod 64 ^ n\right) - 1 .
\end{align*}
\end{example}

\subsection{Degree 2, integers, negative free term}

In this subsection, we obtain formulas for two Lucas sequences that take positive and negative values. They are computed according to Remark \ref{oscillant}.

\begin{example} (\href{https://oeis.org/A088137}{\texttt{OEIS A088137}}, \textbf{generalized Gaussian Fibonacci integers}) If $n \in \Z^+$, then:
$$
t(n) = \frac{1}{9}\left( \left (3 \cdot 91^{n^2 + 3n} - 5 \cdot 91^{n^2 + 2n} + 6 \cdot 91^{n^2 + n}) \bmod ({91^{3n} - 5\cdot 91^{2n} + 9 \cdot 91^n - 9}) \right ) \bmod 91^n \right ) $$ $$ - 3^{n+1}.
$$
The div-mod representation works for $c = 32$, see Prunescu and Sauras-Altuzarra \cite{prunescusauras2024representationcrecursive}. The mod-mod representation works for $e = 91$, which is a spectacular difference. 
\end{example}

\begin{example} (\href{https://oeis.org/A002249}{\texttt{OEIS A002249}}) If $n \in \Z^+$, then:
$$s(n) =\frac{1}{4}\left ( \left( (4 \cdot 21^{n^2 + 3n} - 7 \cdot 21^{n^2 + 2n} + 6 \cdot 21^{n^2 + n}) \bmod (21^{3n} - 3\cdot 21^{2n} + 4 \cdot 21^n - 4) \right ) \bmod 21^n \right ) $$ $$ - 2^{n+1}.$$
\end{example} 

The div-mod representation works for $c = 8$, see Prunescu and Sauras-Altuzarra \cite{prunescusauras2024representationcrecursive}. The mod-mod representation works for $e = 21$, so we remark again a big difference. 

\subsection{Degree 3, natural numbers, negative free term}
We finally apply the theory to some C-recursive natural sequences of degree three, whose recursions do not contain positive coefficients. Consequently, these representations do not need correction terms.

\begin{example} (\href{https://oeis.org/A000073}{\texttt{OEIS A000073}}, \textbf{Tribonacci numbers}) If $n \in \Z^+$, then:
$$ s(n) = \left ( \left  (2^{n^2 + n} \bmod (2^{3n} - 2^{2n} - 2^n - 1) \right ) \bmod 2^n \right ) . $$
\end{example}

\begin{example} (\href{https://oeis.org/A000931}{\texttt{OEIS A000931}}, \textbf{Padovan numbers}) If $n \in \Z^+$, then:
$$ s(n) =  \left ( (2^{n^2 +3 n} - 2^{n^2 + n} ) \bmod (2^{3n} - 2^n - 1) \right ) \bmod 2^n . $$ 
\end{example}

\begin{example} (\href{https://oeis.org/A000930}{\texttt{OEIS A000930}},  \textbf{Narayana's cows sequence}) If $ n \in \Z^+$, then:
$$ s(n) = \left ( 2^{n^2 +3 n}  \bmod (2^{3n} - 2^{2n} - 1) \right ) \bmod 2^n .$$
\end{example}

%%%%%%%%%%%%%%%%%%%%%%%%%%%%%%%%%%%%%%%%%%%%%%%%%%%%%%%%%%%%%%%%%%%%

\end{document}